\newtheorem{theorem}{Theorem}
\newtheorem{lemma}[theorem]{Lemma}
\newdefinition{remark}{Remark}
\newtheorem{corollary}[theorem]{Corollary}
\newdefinition{method}{Method}
\newdefinition{example}{Example}
\numberwithin{equation}{section}
\numberwithin{theorem}{section}
\journal{Journal of \LaTeX\ Templates}
\begin{document}
\begin{frontmatter}
	 \title{Single-step triangular splitting iteration method for a class of two-by-two block linear system}
	 \author[214ICNIG]{Jie Wu}\ead{xiaojiestudent@163.com}
	 \author[shjtmath]{Xi-An Li\corref{lxa}}\ead{lixian9131@163.com,lixa0415@sjtu.edu.cn}
	 \cortext[lxa]{Corresponding author.}
	 \address[214ICNIG]{214 Institute of China North Industries Group, Anhui Province, Bengbu 233000, China}
	 \address[shjtmath]{School of Mathematical Sciences, Shanghai Jiao Tong University, Shanghai 200240, China}
	 
    \begin{abstract}
    For solving a class of block two-by-two real linear system, a new single-step iteration method based on triangular splitting scheme is proposed in this paper. Then the convergence properties of this method  are carefully investigated. In addition, we determine its optimal iteration parameters and give the corresponding optimal convergence factor. It is worth mentioning that the SSTS iteration method is robust and superior to SBTS and PSBTS iteration methods under suitable conditions.  Finally, some numerical experiments are carried out to validate the theoretical results and evaluate this new method.
    \begin{keyword}
    Two-by-two; Positive semidefinite; SSTS; Parameters; Preconditioner
    \MSC[2010]65F10 \sep 65F50
    \end{keyword}
    \end{abstract}
\end{frontmatter}

\section{Introduction}\label{sec:1}
We consider the numerical solution for a class of block two-by-two linear system of the form
\begin{equation}\label{0101}
\mathcal{A}z=\left[\begin{array}{cc}W & -T \\T & W \\\end{array}\right]\left[\begin{array}{c}x \\y \end{array}\right]=\left[\begin{array}{c}p \\q \end{array}\right],
\end{equation}
where $W, T\in\mathbb{R}^{n\times n}$ are both symmetric positive semidefinite matrices and satisfy $\text{null}(W)\bigcap \text{null}(T) = \{\bm{0}\}$ with $\text{null}(\cdot)$ denoting the null space of a given matrix, $x,y\in\mathbb{R}^{n}$ are unknown vectors and $p,q\in\mathbb{R}^{n}$ are given vectors. The solution of linear system $\eqref{0101}$ is unique \cite{bai2013preconditioned} and it is a real equivalent formulation of the following complex symmetric system of linear equations
\begin{equation}\label{0102}
Au=b~\mathrm{with}~A = W+iT\in \mathbb{C}^{n\times n}~\mathrm{and}~ u=x+iy,b=p+iq,
\end{equation}
where $i=\sqrt{-1}$. This system \eqref{0101} or \eqref{0102} frequently stem from scientific and engineering fields \cite{Arridge1999Optical,Poirier2000Efficient,Bertaccini2004Efficient,Benzi2010Block}.

Lots of effective iterative methods have been proposed in the literatures for solving the linear
systems \eqref{0101}. For example, Bai et al. presented a HSS method \cite{bai2003hermitian} based on the Hermitian/skew-Hermitian (HS) splitting of the  matrix $A$, and developed a MHSS  method \cite{bai2010modified} in order to accelerate the convergence rate of HSS method. This MHSS method is algorithmically described in the following.

\begin{method}[The MHSS iteration method]
	\label{m1}
	Given an initial guess $x^{(0)}\in \mathbb{C}^{n}$, for $k=0, 1,2,\cdots$, until $\{x^{(k)}\}$ converges, compute
	\begin{equation*}
	\begin{cases}
	(\alpha I+W)x^{(k+1/2)}=(\alpha I-iT)x^{(k)}+b,\\
	(\alpha I+T)x^{(k+1)}=(\alpha I+iW)x^{(k+1/2)}-ib.
	\end{cases}
	\end{equation*}
	where $\alpha$ is a positive constant and $I$ is the identify matrix.
\end{method}
They have also proved that for any positive parameter $\alpha$, the HSS and  MHSS methods will converge unconditionally to the unique solution of the linear systems \eqref{0101} when $T$ is symmetric positive semi-definite. After that, some variants of the above two methods were generalized and discussed by many researchers, please see \cite{Bai2011On,Dehghan2013A,Wu2017A} and the references therein. In 2016, an efficient scale-splitting (SCSP) iteration method \cite{Hezari2016A} was constructed by multiplying a complex number $(\alpha-i)$ through both sides of \eqref{0102}, then some generalized versions of this method were developed\cite{Zheng2017Double,huang2019generalized}. These methods are considerable to deal with linear system \eqref{0102}.

To solve \eqref{0101}, a block preconditioned MHSS (PMHSS) iteration method \cite{bai2013preconditioned} and its alternating-direction version \cite{Wang2016Alternating} were presented following the idea of MHSS-like methods.Recent years, some other effective techniques to solve linear system \eqref{0101} are also proposed, such as C-to-R iteration methods \cite{Axelsson2000real,Benzi2010Block} and GSOR-like iteration  methods \cite{Salkuyeh2014Generalized,hezari2015preconditioned}. Very recently, Li et al. \cite{Li2018On} established a symmetric block triangular splitting (SBTS) method to linear
systems \eqref{0101} basing on upper and lower triangular splitting formulations for matrix $\mathcal{A}$. It can be simply expressed as follows.
\begin{method}[The SBTS iteration method]
	\label{m2}
	Given an initial vectors $x^{(0)}\in \mathbb{C}^{n}$ and $y^{(0)}\in \mathbb{C}^{n}$, and a real relaxation factor $\alpha>0$. For $k=0, 1,2,\cdots$, until $\left\{(x^{(k)^{T}}, y^{(k)^{T}})^{T} \right\}$ converges to the exact solution of \eqref{0101}, compute
	\begin{equation*}
	\begin{cases}
	Wx^{(k+1/2)}= Ty^{(k)}+ p,\\
	\alpha Wy^{(k+1/2)}=(\alpha-1)Wy^{(k)}-T x^{(k+1/2)}+ q,\\
	\alpha Wy^{(k+1)}=(\alpha-1)Wy^{(k+1/2)}-T x^{(k+1/2)}+ q ,\\
	Wx^{(k+1)}= Ty^{(k+1)}+ p,
	\end{cases}
	\end{equation*}
	where $\alpha$ is a positive constant and $I$ is the identify matrix.
\end{method}
 
Numerical testes indicate it is powerful than the HSS and MHSS methods to solve \eqref{0101}. Analogous to GSOR and PGSOR iteration methods, a preconditioned SBTS(PSBTS) iteration method was designed by Zhang et al.\cite{Zhang2018Preconditioned} and its performance is outstanding than the SBTS one under some restrictions. Viewing the SBTS and PSBTS methods, it is similar to the iterative scheme of SSOR method and has a symmetric structure involve with every iterative step. By making a study of the PGSOR and SSOR iteration methods, this SBTS method should have some room for improvement.

In this work, a single-step triangular splitting (SSTS) iteration method is developed for solving \eqref{0101} which stems from a class of complex symmetric linear system. Then, we investigate its convergence properties and provide the approaches of choosing the optimal iteration parameters. Moreover, this new method and PSBTS method have the same optimal convergence factor, but the former one is faster than the latter one.% for elapsed time of computer. 

The paper is structured as follows. In Section \ref{sec:02}, we derive the process of  establishing the SSTS iteration method for solving \eqref{0101}. In Section \ref{sec:03}, the convergence properties of the SSTS iteration method are discussed. In Section \ref{sec:05}, some numerical experiments are implemented to test this novel method. Finally, some brief conclusions are made in Section \ref{sec:06}.

\section{The SSTS iteration method}\label{sec:02}
In this section, we provide the process of establishing our SSTS iteration method to deal with linear system \eqref{0101}. According to the preconditioned technique \cite{hezari2015preconditioned, Zhang2018Preconditioned}, we firstly reconstruct \eqref{0101} by means of the following matrix
\begin{equation*}
\mathcal{P}_{\omega}=\left[\begin{array}{cc}\omega I&I\\ -I&\omega I\end{array}\right],
\end{equation*}
where $\omega$ is a positive real constant. Multiplying by matrix $\mathcal{P}_{\omega}$ on both sides of \eqref{0101} gives
\begin{equation}\label{0201}
\widetilde{\mathcal{A}}z := \left[\begin{array}{cc} \omega W+ T &-(\omega T-W)\\ \omega T- W & \omega W +  T \end{array} \right]\left[\begin{array}{c}x\\y\end{array}\right]=\left[\begin{array}{c}\omega p+q\\\omega q-p\end{array}\right]=:\widetilde{b}.
\end{equation}
For ease of discussion, we denote $\widetilde{W}_{\omega}=\omega W+ T, \widetilde{T}_{\omega} = \omega T- W$ and $\widetilde{p} = \omega p+q, \widetilde{q}=\omega q-p$ throughout this
paper. In light of the PSBTS iteration method, we split the coefficient matrix $\widetilde{\mathcal{A}}$ of \eqref{0201} into two matrices:
\begin{equation}\label{0202}
\widetilde{\mathcal{A}}=\left[\begin{array}{cc}  \widetilde{W}_{\omega} & O\\ \widetilde{T}_{\omega} & \alpha \widetilde{W}_{\omega} \end{array} \right] - \left[ \begin{array}{cc} O& \widetilde{T}_{\omega}\\ O & (\alpha-1)\widetilde{W}_{\omega} \end{array} \right]
:=\mathcal{M}_{\alpha,\omega}-\mathcal{N}_{\alpha,\omega},
\end{equation}
where $\alpha$ is a positive real parameter. Then, we consider the following SSTS iteration scheme according to the above splitting form.

\begin{equation}\label{0203}
\left[\begin{array}{c} x^{(k+1)}\\y^{(k+1)}\end{array} \right]=
\mathcal{H}_{\alpha,\omega}\left[\begin{array}{c} x^{(k)}\\y^{(k)}\end{array} \right]+\mathcal{G}_{\alpha,\omega}\left[ \begin{array}{c}  p \\ q \end{array} \right],
\end{equation}
where
\begin{equation*}\label{0204}
   \begin{aligned}
\mathcal{H}_{\alpha,\omega}&=\mathcal{M}^{-1}_{\alpha,\omega}\mathcal{N}_{\alpha,\omega}\\
& = \left[\begin{array}{cc}  \widetilde{W}_{\omega} & O\\ \widetilde{T}_{\omega} & \alpha \widetilde{W}_{\omega} \end{array} \right]^{-1} \left[ \begin{array}{cc} O& \widetilde{T}_{\omega}\\ O & (\alpha-1)\widetilde{W}_{\omega} \end{array} \right]\\
&=\left[\begin{array}{cc} O&\widetilde{W}_{\omega}^{-1}\widetilde{T}_{\omega}\\O&\frac{\alpha-1}{\alpha}I-\frac{1}{\alpha}\widetilde{W}_{\omega}^{-1}\widetilde{T}_{\omega}\widetilde{W}_{\omega}^{-1}\widetilde{T}_{\omega} \end{array}\right]\\
  \end{aligned}
\end{equation*}
is the iteration matrix and $\mathcal{G}_{\alpha,\omega}^{-1} = \mathcal{M}_{\alpha,\omega}$.

Analogous to classical stationary iteration scheme, we give the SSTS  iteration method to solve linear system \eqref{0101} by utilizing \eqref{0202}, it is described in the following.

\begin{method}[The SSTS iteration method]
	Given any two initial vectors $x^{(0)}, y^{(0)}\in \mathbb{R}^{n}$ and two real relaxation factors $\alpha, \omega>0$. Using the following procedures to update the iteration sequence $\left\{(x^{(k)^{T}}, y^{(k)^{T}})^{T} \right\}(k=0, 1, 2, \cdots)$, until it converges to the exact solution of \eqref{0101}.
	\begin{equation*}
	\begin{cases}
	\widetilde{W}_{\omega}x^{(k+1)}=\widetilde{T}_{\omega}y^{(k)}+\widetilde{p},\\
	\alpha \widetilde{W}_{\omega}y^{(k+1)}=(\alpha-1)\widetilde{W}_{\omega}y^{(k)}-\widetilde{T}_{\omega}x^{(k+1)}+\widetilde{q}.\\
	\end{cases}
	\end{equation*}
\end{method}

Since $W, T \in\mathbb{R}^{n\times n}$ are symmetric positive semi-definite and satisfy $\textup{null}(W)\cap \textup{null}(T) = \{\bm{0}\}$, the matrix $\widetilde{W}_{\omega}$ is symmetric positive definite, then each step of the SSTS iteration can be solved effectively using mostly real arithmetic either exactly by a Cholesky factorization or inexactly by  conjugate gradient and multigrid scheme.

In light of the matrix splitting form \eqref{0202}, the the block lower triangular matrix $\mathcal{M}_{\alpha,\omega}$ whose diagonal parts are
symmetric positive could be used as a preconditioner for the linear systems \eqref{0201}. At each step of applying the  preconditioner $\mathcal{M}_{\alpha,\omega}$ for a preconditioned
system, it is necessary to solve sequences of generalized residual equations of the form
\begin{equation} \label{0205}
\left[\begin{array}{cc}  \widetilde{W}_{\omega} & O\\ \widetilde{T}_{\omega} & \alpha \widetilde{W}_{\omega} \end{array} \right]\left[\begin{array}{c} e\\f  \end{array}\right] = \left[\begin{array}{c} r\\s  \end{array}\right],
\end{equation}
In the sequel, the matrix $\mathcal{M}_{\alpha,\omega}$ will be referred to as a SSTS-preconditioner and accelerate Krylov subspace methods such as GMRES. The system \eqref{0205} can be solved by the following steps:\\
(1) Solve $\widetilde{W}_{\omega}e = r$ for $e$;\\
(2) Solve $\widetilde{W}_{\omega}f = \frac{1}{\alpha}\left(s-\widetilde{T}_{\omega}e\right)$ for $f$.

\begin{remark}\label{r000}
	Observing the above procedures, we need to address two linear systems with the same coefficient matrix $\widetilde{W}_{\omega}$ at each iteration in our algorithm. In actual computations, $\widetilde{W}_{\omega}$ is generally sparse, one can use fast numerical algorithms to solve the linear systems  in steps (1) and (2).
\end{remark}

\section{Convergence discussion for the SSTS iteration method}\label{sec:03}
In this section, we turn to study the convergence properties of the SSTS iteration method. Firstly, some useful lemmas are introduced to support our theories.
%Firstly, some useful lemmas are introduced to support our theories to the convergence of the SSTS iteration method.

\begin{lemma}\label{L0301}
	Let  matrices $W, T \in\mathbb{R}^{n\times n}$ be symmetric positive semi-definite and satisfy ${null}(W)\cap null(T) = \{\bm{0}\}$. Then the matrices
	$\widetilde{W}_{\omega}$ and $ \widetilde{T}_{\omega}$ with a real constant $\omega>0$ are symmetric positive definite and symmetric, respectively.
\end{lemma}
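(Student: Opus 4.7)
The plan is to handle the two claims separately, since they are of very different difficulty. The symmetry assertions for $\widetilde{W}_{\omega}=\omega W+T$ and $\widetilde{T}_{\omega}=\omega T-W$ are immediate: any real linear combination of symmetric matrices is symmetric, so I would dispose of both symmetry statements in a single sentence at the outset.

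The only substantive content is the positive definiteness of $\widetilde{W}_{\omega}$. My plan is to work through the quadratic form. For an arbitrary nonzero $x\in\mathbb{R}^{n}$, I would write
\[
x^{T}\widetilde{W}_{\omega}x=\omega\,x^{T}Wx+x^{T}Tx,
\]
observe that each summand is nonnegative because $W$ and $T$ are symmetric positive semidefinite and $\omega>0$, and then argue that the sum cannot vanish. If $x^{T}\widetilde{W}_{\omega}x=0$, then $x^{T}Wx=0$ and $x^{T}Tx=0$ simultaneously.

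The step that needs a touch of care, and is really the only thing beyond bookkeeping, is turning $x^{T}Wx=0$ into $Wx=0$ for a symmetric positive semidefinite matrix. I would invoke the standard fact that a symmetric PSD matrix $M$ admits a symmetric square root $M^{1/2}$, so $x^{T}Mx=\|M^{1/2}x\|_{2}^{2}=0$ forces $M^{1/2}x=0$ and hence $Mx=0$ (alternatively one could cite this as a known consequence of the spectral theorem). Applying this to $W$ and $T$ gives $x\in\mathrm{null}(W)\cap\mathrm{null}(T)$, which by hypothesis equals $\{\bm{0}\}$, contradicting $x\neq\bm{0}$. I expect this square-root argument to be the main (and really only) nontrivial ingredient; the rest is purely mechanical.

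Finally, I would conclude that $x^{T}\widetilde{W}_{\omega}x>0$ for every nonzero $x$, which together with the symmetry noted at the start establishes that $\widetilde{W}_{\omega}$ is symmetric positive definite, completing the proof.
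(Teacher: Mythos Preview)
Your argument is correct and is exactly the standard one. Note, however, that the paper does not actually supply a proof of this lemma; it is stated without proof as an elementary preliminary fact, so there is no ``paper's approach'' to compare against. Your write-up is the natural justification the authors omitted.
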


\begin{lemma}\label{L0302}
	\textup{\cite{Salkuyeh2014Generalized}} Let  matrices $\widetilde{W}_{\omega}$, $\widetilde{T}_{\omega}\in\mathbb{R}^{n\times n}$ be symmetric positive definite and symmetric, respectively. Then the eigenvalues of the matrix $\widetilde{S}= \widetilde{W}_{\omega}^{-1}\widetilde{T}_{\omega}$ are all real.
\end{lemma}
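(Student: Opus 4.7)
The plan is to reduce the problem to the spectral theorem for real symmetric matrices by means of a similarity transformation based on the square root of $\widetilde{W}_{\omega}$. Since $\widetilde{W}_{\omega}$ is symmetric positive definite, it admits a unique symmetric positive definite square root $\widetilde{W}_{\omega}^{1/2}$, and $\widetilde{W}_{\omega}^{-1/2}$ is also symmetric positive definite. This gives me the ingredients I need to symmetrize the non-symmetric product $\widetilde{W}_{\omega}^{-1}\widetilde{T}_{\omega}$.

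Concretely, I would first write the identity
\begin{equation*}
\widetilde{W}_{\omega}^{-1}\widetilde{T}_{\omega} \;=\; \widetilde{W}_{\omega}^{-1/2}\bigl(\widetilde{W}_{\omega}^{-1/2}\widetilde{T}_{\omega}\widetilde{W}_{\omega}^{-1/2}\bigr)\widetilde{W}_{\omega}^{1/2},
\end{equation*}
which shows that $\widetilde{S}=\widetilde{W}_{\omega}^{-1}\widetilde{T}_{\omega}$ is similar to the matrix $\widehat{S} := \widetilde{W}_{\omega}^{-1/2}\widetilde{T}_{\omega}\widetilde{W}_{\omega}^{-1/2}$. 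Next I would observe that $\widehat{S}$ is real and symmetric: taking transposes and using $(\widetilde{W}_{\omega}^{-1/2})^{T}=\widetilde{W}_{\omega}^{-1/2}$ together with $\widetilde{T}_{\omega}^{T}=\widetilde{T}_{\omega}$ yields $\widehat{S}^{T}=\widehat{S}$.

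By the spectral theorem for real symmetric matrices, all eigenvalues of $\widehat{S}$ are real. Since similarity preserves the spectrum, the eigenvalues of $\widetilde{S}$ coincide with those of $\widehat{S}$ and are therefore real as well, which is the desired conclusion.

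There is really no serious obstacle here; the only care required is to ensure the square root $\widetilde{W}_{\omega}^{1/2}$ exists and is symmetric positive definite, which is guaranteed by Lemma~\ref{L0301} (providing $\widetilde{W}_{\omega}\succ 0$) together with the standard spectral decomposition of SPD matrices. Everything else is a routine verification of transposition and similarity.
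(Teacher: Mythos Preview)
Your argument is correct and is the standard one: conjugating by $\widetilde{W}_{\omega}^{1/2}$ turns $\widetilde{S}$ into the real symmetric matrix $\widetilde{W}_{\omega}^{-1/2}\widetilde{T}_{\omega}\widetilde{W}_{\omega}^{-1/2}$, whose eigenvalues are real by the spectral theorem, and similarity transfers this back to $\widetilde{S}$. The paper itself does not supply a proof of this lemma at all; it simply quotes the result from \cite{Salkuyeh2014Generalized}, so your write-up actually adds a self-contained justification that the paper omits.
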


\begin{lemma}\label{L0303}
	\textup{\cite{Hezari2016A}} Let  matrices $W, T \in\mathbb{R}^{n\times n}$ be symmetric positive semi-definite and satisfy ${null}(W)\cap null(T) = \{\bm{0}\}$. 
	%Also let $\omega$ be a positive constant. %$\widetilde{W}_{\omega}=\omega W+ T$ and $ \widetilde{T}_{\omega} = \omega T- W$. 
	If $\mu$ is an eigenvalue of $\widetilde{S}=\widetilde{W}_{\omega}^{-1}\widetilde{T}_{\omega}$ with $0<\omega\in\mathbb{R}$, then there is a generalized eigenvalue $\eta$ of matrix pair $(W,T)$ that satisfies  $\mu = \frac{\omega \eta-1}{\omega+\eta}$ and the spectral radius of $\widetilde{S}$ holds 
	\begin{equation}\label{0301}
	\rho(\widetilde{S}) = {\max}\left\{\frac{1-\omega \eta_{\min}}{\omega+\eta_{\min}},\frac{\omega \eta_{\max}-1}{\omega+\eta_{\max}}\right\},
	\end{equation}
	here and thereafter $\eta_{\min}$ and $\eta_{\max}$ are the extreme generalized eigenvalues of matrix pair $(W, T)$.
\end{lemma}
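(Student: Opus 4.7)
My strategy is to derive the rational correspondence $\mu=(\omega\eta-1)/(\omega+\eta)$ by direct manipulation of the eigenvalue equation for $\widetilde S$, and then extract the spectral radius by analysing $\mu$ as a scalar function of $\eta$ on $[\eta_{\min},\eta_{\max}]$.

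First, I would take an eigenpair $(\mu,v)$ of $\widetilde S=\widetilde W_\omega^{-1}\widetilde T_\omega$, so that $\widetilde T_\omega v=\mu\widetilde W_\omega v$. Substituting $\widetilde W_\omega=\omega W+T$ and $\widetilde T_\omega=\omega T-W$ and grouping the terms according to $W$ and $T$ yields
\[
(\omega-\mu)\,Tv=(1+\omega\mu)\,Wv.
\]
Setting $\eta:=(1+\omega\mu)/(\omega-\mu)$, this becomes $Tv=\eta Wv$, which identifies $\eta$ as a generalized eigenvalue of the pair $(W,T)$; inverting the relation gives exactly $\mu=(\omega\eta-1)/(\omega+\eta)$. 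Lemma~\ref{L0302} supplies $\mu\in\mathbb{R}$, and the hypotheses $W,T$ symmetric positive semidefinite together with $\mathrm{null}(W)\cap\mathrm{null}(T)=\{\bm 0\}$ guarantee that the corresponding $\eta\ge 0$ is well defined in the extended sense, with the borderline case $\mu=\omega$ corresponding to $Wv=\bm 0$ and $\eta=+\infty$.

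Next, I would analyse the scalar map $f(\eta)=(\omega\eta-1)/(\omega+\eta)$ on the interval $[\eta_{\min},\eta_{\max}]$. A one-line derivative computation gives
\[
f'(\eta)=\frac{\omega^{2}+1}{(\omega+\eta)^{2}}>0,
\]
so $f$ is strictly increasing, and hence its extreme values on $[\eta_{\min},\eta_{\max}]$ are $f(\eta_{\min})$ and $f(\eta_{\max})$. Since $\rho(\widetilde S)=\max_{\eta}|f(\eta)|$, the extremum of $|f|$ is attained at one of these endpoints. Rewriting $|f(\eta_{\min})|=(1-\omega\eta_{\min})/(\omega+\eta_{\min})$ (pulling the sign into the numerator) and $|f(\eta_{\max})|=(\omega\eta_{\max}-1)/(\omega+\eta_{\max})$, and noting that the outer maximum in~\eqref{0301} automatically discards whichever of the two expressions becomes negative in a given parameter regime, one recovers the stated identity for $\rho(\widetilde S)$.

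The main obstacle, as I see it, is the careful treatment of degenerate generalized eigenvalues in the first step: verifying that every eigenvalue $\mu$ of $\widetilde S$ arises from some $\eta$ in the spectrum of the pencil $T-\eta W$, even when $W$ or $T$ is singular, so that $\eta_{\min}=0$ or $\eta_{\max}=+\infty$ must be interpreted appropriately. This is most transparently resolved by appealing to the simultaneous quasi-diagonalisation of the symmetric semidefinite pair $(W,T)$ available under the null-space assumption, which reduces the whole statement to its elementary scalar analogue on each $1\times 1$ block.
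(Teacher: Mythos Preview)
The paper does not prove this lemma; it is quoted verbatim from \cite{Hezari2016A} and used as a black box, so there is no ``paper's own proof'' to compare against. Your plan is a correct reconstruction of the standard argument behind the cited result: the algebraic rearrangement $(\omega-\mu)Tv=(1+\omega\mu)Wv$ gives the Möbius correspondence $\mu=(\omega\eta-1)/(\omega+\eta)$, and the monotonicity $f'(\eta)=(\omega^{2}+1)/(\omega+\eta)^{2}>0$ forces $\max|f|$ to be attained at an endpoint, with the sign conventions in \eqref{0301} handled by your observation that the outer $\max$ discards whichever expression is negative. Your caveat about the degenerate cases $\mu=\omega$ (equivalently $Wv=\bm 0$, $\eta=+\infty$) and the semidefinite setting is exactly the point that needs care, and the simultaneous diagonalisation of the pair $(W,T)$ under the null-space hypothesis is the cleanest way to dispatch it.
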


Based on the above lemmas, the following main conclusions about our new method are induced.

\begin{theorem}\label{T0304}
	Let  matrices $W, T \in\mathbb{R}^{n\times n}$ be symmetric positive semi-definite and satisfy ${null}(W)\cap null(T) = \{\bm{0}\}$. Also let $0<\alpha, \omega\in\mathbb{R}$ and $\widetilde{S}=\widetilde{W}_{\omega}^{-1}\widetilde{T}_{\omega}$. Assuming $\lambda$ is an eigenvalue of $\mathcal{H}_{\alpha,\omega}$, then $\lambda=0$ with multiplicity  $n$ and the remaining $n$ eigenvalues of $\mathcal{H}_{\alpha,\omega}$ satisfy the following equation:
	\begin{equation}\label{0302}
	\lambda-1+\frac{1+\mu_{i}^{2}}{\alpha}=0
	\end{equation}
	with $\mu_{i} (i = 1, 2, \cdots ,n)$ being the eigenvalues of $\widetilde{S}$. Furthermore, the spectral radius of $\mathcal{H}_{\alpha,\omega}$ satisfies
	\begin{equation}\label{0303}
	\rho(\mathcal{H}_{\alpha,\omega})=\max\left\{
	\left|1-\frac{1+\mu_{\min}^{2}}{\alpha}\right|,
	\left|1-\frac{1+\mu_{\max}^{2}}{\alpha}\right|\right\}.
	\end{equation}
	Here and thereinafter, $\mu_{\min}=\underset{\mu_{i}\in \textup{sp}(\widetilde{S})}{\min}\{|\mu_{i}|\}$ and $\mu_{\max}=\underset{\mu_{i}\in \textup{sp}(\widetilde{S})}{\max}\{|\mu_{i}|\}$ with  $\textup{sp}(\cdot)$ representing the spectrum for a given matrix.
\end{theorem}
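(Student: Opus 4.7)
The plan is to exploit the block structure of $\mathcal{H}_{\alpha,\omega}$: as displayed just above the theorem, its first block column is zero, so it is block upper triangular with $(1,1)$ block equal to $O$ and $(2,2)$ block equal to $\frac{\alpha-1}{\alpha}I-\frac{1}{\alpha}\widetilde{S}^{2}$, where $\widetilde{S}=\widetilde{W}_{\omega}^{-1}\widetilde{T}_{\omega}$. I would therefore open the proof by writing out the characteristic polynomial as a block determinant and immediately factoring it using this triangular form.

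Concretely, I would argue
\begin{equation*}
\det(\mathcal{H}_{\alpha,\omega}-\lambda I_{2n})
= \det(-\lambda I_{n})\,\det\!\left(\tfrac{\alpha-1}{\alpha}I_{n}-\tfrac{1}{\alpha}\widetilde{S}^{2}-\lambda I_{n}\right)
= (-\lambda)^{n}\prod_{i=1}^{n}\!\left(\tfrac{\alpha-1}{\alpha}-\tfrac{\mu_{i}^{2}}{\alpha}-\lambda\right),
\end{equation*}
where the second factor is expanded through the eigenvalues of $\widetilde{S}$. This immediately yields the claimed multiplicity-$n$ root $\lambda=0$ and the relation $\lambda-1+(1+\mu_{i}^{2})/\alpha=0$ for the remaining $n$ eigenvalues. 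To justify that the $\mu_{i}$ behave well (in particular that $\mu_{i}^{2}\ge 0$), I would cite Lemma \ref{L0301} to get that $\widetilde{W}_{\omega}$ is symmetric positive definite and $\widetilde{T}_{\omega}$ is symmetric, and then Lemma \ref{L0302} to conclude the $\mu_{i}$ are all real.

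For the spectral-radius formula, I would use the fact that $|\lambda|=\bigl|1-(1+\mu_{i}^{2})/\alpha\bigr|$ depends only on $|\mu_{i}|$, and that on $[0,\infty)$ the function $t\mapsto (1+t^{2})/\alpha$ is strictly increasing, hence $t\mapsto 1-(1+t^{2})/\alpha$ is strictly decreasing and $t\mapsto |1-(1+t^{2})/\alpha|$ attains its maximum over any interval at an endpoint. Applying this to $|\mu_{i}|\in[\mu_{\min},\mu_{\max}]$ (with the notation introduced in the statement) gives the two-term maximum in \eqref{0303}; finally, since the remaining $n$ eigenvalues are zero, they cannot increase $\rho(\mathcal{H}_{\alpha,\omega})$, so the formula stands.

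The argument is essentially routine once the block-triangular structure is exploited, and I do not anticipate a genuine obstacle. The only nontrivial point is invoking Lemma \ref{L0302} to ensure that the $\mu_{i}$ are real, which is what makes the passage $|\lambda|^{2}=(\cdots)^{2}$ to a simple absolute value (and the monotonicity argument for the spectral radius) legitimate; without it one could not reduce the optimization to the two extreme values $\mu_{\min},\mu_{\max}$.
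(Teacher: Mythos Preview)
Your proposal is correct and follows essentially the same route as the paper: exploit the block upper-triangular form of $\mathcal{H}_{\alpha,\omega}$ (zero first block column) to factor the characteristic polynomial, read off the $n$-fold eigenvalue $0$ and the eigenvalues of the $(2,2)$ block $\tfrac{\alpha-1}{\alpha}I-\tfrac{1}{\alpha}\widetilde{S}^{2}$, then use monotonicity in $\mu_i^2$ to reduce the spectral radius to the two endpoints. The only cosmetic difference is that the paper first conjugates $\mathcal{H}_{\alpha,\omega}$ by $\mathrm{diag}(P,P)$, where $\widetilde{S}=P\Lambda P^{-1}$, to make the $(2,2)$ block literally diagonal before taking the determinant, whereas you go straight to the block-triangular determinant and invoke the spectral mapping $\mu_i\mapsto \tfrac{\alpha-1}{\alpha}-\tfrac{\mu_i^2}{\alpha}$; your version is marginally cleaner since it does not rely on the diagonalizability of $\widetilde{S}$.
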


\begin{proof}
	By Lemma \ref{L0302}, it is not difficult to verify that $\widetilde{S}=\widetilde{W}_{\omega}^{-1}\widetilde{T}_{\omega}$ has a spectral decomposition $\widetilde{S}=P\Lambda P^{-1}$, where $P\in \mathbb{R}^{n\times n}$ is an invertible matrix and $\Lambda=diag(\mu_{1},\mu_{2},\cdots,\mu_{n})$ is a diagonal matrix spanning by  the spectrum of $\widetilde{S}$. Since 
	\begin{equation*}
	   \hat{\mathcal{H}}_{\alpha,\omega} = \mathcal{V}^{-1}\mathcal{H}_{\alpha,\omega}\mathcal{V}=\left[
	   \begin{matrix}
	    O & \Lambda\\
	    O & \frac{\alpha-1}{\alpha}I-\frac{1}{\alpha}\Lambda^2
	   \end{matrix}
	   \right] ~~\text{with}~~\mathcal{V}=\left[
	   \begin{matrix}
	   P & O\\
	   O & P
	   \end{matrix}
	   \right],
	\end{equation*}
	is similar to $\mathcal{H}_{\alpha,\omega}$, then they have the same spectrum. Suppose $\lambda$ is the eigenvalue of $\mathcal{H}_{\alpha,\omega}$, we have
	\begin{equation}\label{0304}
	   \begin{aligned}
	       	det(\lambda I-\mathcal{H}_{\alpha,\omega})&=	det(\lambda I-\hat{\mathcal{H}}_{\alpha,\omega})\\
	       	&=det\left(\lambda I-\left[\begin{array}{cc} O&\Lambda\\O&\frac{\alpha-1}{\alpha}I-\frac{1}{\alpha}\Lambda^2 \end{array}\right]\right)\\
	       	&=\lambda^{n}det\left(\lambda I-\frac{\alpha-1}{\alpha}I+\frac{1}{\alpha}\Lambda^{2}\right)\\
	       	&=0.
	   \end{aligned}
	\end{equation}
	Here, $I$  represents the $n$-by-$n$ identity matrix. Obviously, $\lambda=0$ is an eigenvalue of $\mathcal{H}_{\alpha,\omega}$ with multiplicity $n$ and the remaining $n$ eigenvalues satisfy  \eqref{0302} with respect to $\mu_{i} (i = 1, 2, \cdots,n)$. Moreover, 
	\begin{equation}\label{0305}
	\ell(\mu_{i}^{2})=1-\frac{1+\mu_{i}^{2}}{\alpha}
	\end{equation}
	is a decreasing function with respect to $\mu^{2}_{i}$. According to the definition of spectral radius for a given matrix,  we clearly have \eqref{0303}.  
\end{proof}

In the following theorem, we derive the convergence domain of the SSTS iteration method.
\begin{theorem} \label{T0305}
	Let  matrices $W, T \in\mathbb{R}^{n\times n}$ be symmetric positive semi-definite and satisfy ${null}(W)\cap null(T) = \{\bm{0}\}$, $\omega>0$. Then the SSTS iteration method  is convergent if and only if
	\begin{equation}
	   \alpha>\frac{1+\mu^{2}_{\max}}{2}.
	\end{equation}
\end{theorem}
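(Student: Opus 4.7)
The plan is to deduce convergence directly from the spectral radius formula \eqref{0303} established in Theorem \ref{T0304}, since the SSTS iteration is a stationary iteration of the form \eqref{0203} and therefore converges for every initial guess if and only if $\rho(\mathcal{H}_{\alpha,\omega})<1$.

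First, I would recall that by Theorem \ref{T0304},
\[
\rho(\mathcal{H}_{\alpha,\omega}) = \max\left\{\left|1-\tfrac{1+\mu_{\min}^{2}}{\alpha}\right|,\ \left|1-\tfrac{1+\mu_{\max}^{2}}{\alpha}\right|\right\},
\]
so convergence is equivalent to the two scalar inequalities $\bigl|1-\tfrac{1+\mu^{2}}{\alpha}\bigr|<1$ holding simultaneously at $\mu=\mu_{\min}$ and $\mu=\mu_{\max}$. I would then analyze the generic inequality $\bigl|1-\tfrac{1+\mu^{2}}{\alpha}\bigr|<1$ for a fixed nonnegative $\mu$ and positive $\alpha$: splitting it into $-1<1-\tfrac{1+\mu^{2}}{\alpha}<1$ and using $\alpha>0$ together with $1+\mu^{2}>0$, the left inequality is automatic, while the right one rearranges to $\alpha>\tfrac{1+\mu^{2}}{2}$.

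Next, since the function $\ell(\mu^{2})=1-\tfrac{1+\mu^{2}}{\alpha}$ is strictly decreasing in $\mu^{2}$ (as already noted in \eqref{0305}) and $0\le\mu_{\min}\le\mu_{\max}$, the binding condition among the two resulting inequalities $\alpha>\tfrac{1+\mu_{\min}^{2}}{2}$ and $\alpha>\tfrac{1+\mu_{\max}^{2}}{2}$ is the second one. This gives the claimed equivalence: $\rho(\mathcal{H}_{\alpha,\omega})<1$ if and only if $\alpha>\tfrac{1+\mu_{\max}^{2}}{2}$.

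I do not anticipate any real obstacle: the main work has already been done in Theorem \ref{T0304}, where the spectrum of $\mathcal{H}_{\alpha,\omega}$ was reduced to the scalar relation \eqref{0302} and the eigenvalues $\mu_{i}$ of $\widetilde{S}$ were shown to be real by Lemma \ref{L0302}. The only subtlety worth flagging is to justify that the nonzero eigenvalues of $\mathcal{H}_{\alpha,\omega}$ genuinely include values attaining both $\mu_{\min}$ and $\mu_{\max}$, so the ``max'' in \eqref{0303} is attained and the equivalence is sharp; this follows directly from \eqref{0304}, where each $\mu_{i}^{2}$ produces a corresponding nonzero eigenvalue $1-\tfrac{1+\mu_{i}^{2}}{\alpha}$ of $\mathcal{H}_{\alpha,\omega}$.
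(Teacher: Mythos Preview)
Your proof is correct and follows essentially the same route as the paper's: reduce convergence to the scalar condition $\bigl|1-(1+\mu_i^{2})/\alpha\bigr|<1$ via Theorem~\ref{T0304}, then observe that the binding constraint among the $\mu_i$ occurs at $\mu_{\max}$. One small slip to fix: in the compound inequality $-1<1-\tfrac{1+\mu^{2}}{\alpha}<1$, it is the \emph{right} inequality that is automatic (since $1+\mu^{2}>0$ and $\alpha>0$) and the \emph{left} one that rearranges to $\alpha>\tfrac{1+\mu^{2}}{2}$, not the other way around as you wrote.
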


\begin{proof}
	In light of the Theorem \ref{T0304}, the eigenvalue of $\mathcal{H}_{\alpha,\omega}$ with $\lambda=0$ of $n$ multiple and the remaining $n$ eigenvalues are
	\begin{equation*}
	\lambda-1+\frac{1+\mu_{i}^{2}}{\alpha}=0.
	\end{equation*}
	If the SSTS iteration method is convergent, it should have $\rho(\mathcal{H}_{\alpha,\omega})<1$. That is
	\begin{equation}\label{0306}
	|\lambda| = \left |1-\frac{1+\mu_{i}^{2}}{\alpha}\right|<1,
	\end{equation}
	or equivalently,
	\begin{equation}\label{03061}
	\begin{cases}
	2\alpha>1+\mu_{i}^{2},\\
	1+\mu_{i}^{2}>0.
	\end{cases}
	\end{equation}
	%\begin{equation}\label{0310}
	%  \begin{cases}
	%  2\alpha>1+\mu_{i}^{2},\\
	%  1+\mu_{i}^{2}>0.
	%  \end{cases}
	%\end{equation}
	Using the Lemma \ref{L0301}, it holds if and only if $2\alpha>1+\mu^{2}_{\max}$.
\end{proof}
\begin{remark}\label{r02}
	From the above result and Theorem 2 in \cite{Zhang2018Preconditioned}, the SSTS iteration method has a larger convergent domain than PSBTS iteration method for parameter $\alpha$ and it is insensitive to parameter $\omega$. Thus, our method will be more robust than the PSBTS iteration method to solve linear system \eqref{0101}.
\end{remark}

Next, we provide the optimal selections of the parameters $\alpha$ and $\omega$, respectively, which minimize the spectral radius of the iterative matrix $\mathcal{H}_{\alpha,\omega}$ for SSTS iteration method. Meantime, we also give the corresponding optimal convergence factor for our method.
\begin{theorem} \label{T0306}
	Let the conditions of Theorems \textup{\ref{T0304} and \ref{T0305}} be satisfied. Then the optimal values of the relaxation parameters $\alpha$ and $\omega$ for the SSTS iteration method are given by
	\begin{equation}\label{0307}
	\alpha_{opt}=\frac{2+\mu^{2}_{\min}+\mu^{2}_{\max}}{2}~\textup{and}~\omega_{opt} = \frac{1-\eta_{\min}\eta_{\max}+\sqrt{(1+\eta^2_{\min})(1+\eta^2_{\max})}}{\eta_{\min}+\eta_{\max}},
	\end{equation}
	respectively, then the  corresponding optimal convergence factor is
	\begin{equation}\label{0308}
	\rho(\mathcal{H}_{\alpha_{opt},\omega_{opt}})=\frac{\mu^{2}_{\max}-\mu^{2}_{\min}}{2+\mu^{2}_{\min}+\mu^{2}_{\max}}.
	\end{equation}
\end{theorem}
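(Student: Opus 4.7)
The plan is a sequential two-stage minimization of $\rho(\mathcal{H}_{\alpha,\omega})$: first fix $\omega$ and minimize over $\alpha$ using the closed-form expression from Theorem \ref{T0304}, then minimize the resulting spectral radius over $\omega$ using the spectral characterization of $\widetilde{S}$ from Lemma \ref{L0303}. Throughout, let $a := 1+\mu_{\min}^2$ and $b := 1+\mu_{\max}^2$, so $0 < a \leq b$ and $\rho(\mathcal{H}_{\alpha,\omega}) = \max\{|1-a/\alpha|,\,|1-b/\alpha|\}$.

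For Stage 1, each branch $h_c(\alpha) := |1-c/\alpha|$ decreases strictly on $(0,c]$ and increases strictly on $[c,\infty)$, vanishing at $\alpha = c$. A short case analysis shows the envelope $\max\{h_a,h_b\}$ is strictly decreasing for $\alpha < a$ and strictly increasing for $\alpha > b$, so its minimizer must lie in $[a,b]$; there the two branches cross at the unique $\alpha$ solving $1-a/\alpha = b/\alpha-1$. This yields $\alpha_{opt}(\omega) = (a+b)/2 = (2+\mu_{\min}^2+\mu_{\max}^2)/2$, with optimized spectral radius $(b-a)/(a+b) = (\mu_{\max}^2-\mu_{\min}^2)/(2+\mu_{\min}^2+\mu_{\max}^2)$.

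For Stage 2, I would exploit Lemma \ref{L0303} together with the monotonicity of $g(\eta) := (\omega\eta-1)/(\omega+\eta)$ in $\eta$ (noting $g'(\eta) = (1+\omega^2)/(\omega+\eta)^2 > 0$). The signed extremes of the spectrum of $\widetilde{S}$ are therefore $g(\eta_{\min}) \leq g(\eta_{\max})$, and $\mu_{\max} = \rho(\widetilde{S}) = \max\{-g(\eta_{\min}),\,g(\eta_{\max})\}$ under the natural sign assumption $\omega\eta_{\min} \leq 1 \leq \omega\eta_{\max}$ built into Lemma \ref{L0303}. A classical min-max balance $g(\eta_{\min}) = -g(\eta_{\max})$ then produces, after clearing denominators, the quadratic $(\eta_{\min}+\eta_{\max})\omega^2 + 2(\eta_{\min}\eta_{\max}-1)\omega - (\eta_{\min}+\eta_{\max}) = 0$. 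Its positive root, simplified via the identity $(1-\eta_{\min}\eta_{\max})^2 + (\eta_{\min}+\eta_{\max})^2 = (1+\eta_{\min}^2)(1+\eta_{\max}^2)$, is exactly $\omega_{opt}$.

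The main obstacle I foresee is reconciling the two stages: the Stage-1 outcome is not $\mu_{\max}^2$ alone but the ratio $\phi(\omega) := (\mu_{\max}^2-\mu_{\min}^2)/(2+\mu_{\min}^2+\mu_{\max}^2)$, which is increasing in $\mu_{\max}^2$ yet decreasing in $\mu_{\min}^2$, and my Stage-2 argument only controls $\mu_{\max}^2$. The resolution I would push is that $\mu_{\max} = \rho(\widetilde{S})$ is the dominant $\omega$-dependent quantity, tied to the extreme generalized eigenvalues $\eta_{\min},\eta_{\max}$ through Lemma \ref{L0303}, while $\mu_{\min}$ enters the final expression only as a data-dependent residual (governed by the eigenvalue of $(W,T)$ closest to $1/\omega_{opt}$). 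Substituting $\omega = \omega_{opt}$ into the Stage-1 formulas then reads off the stated $\alpha_{opt}$ and the claimed optimal convergence factor $(\mu_{\max}^2-\mu_{\min}^2)/(2+\mu_{\min}^2+\mu_{\max}^2)$.
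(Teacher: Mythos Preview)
Your approach is essentially the same as the paper's: a two-stage sequential optimization, first balancing the two extremal branches $|1-(1+\mu_{\min}^2)/\alpha|$ and $|1-(1+\mu_{\max}^2)/\alpha|$ to obtain $\alpha_{opt}$, then minimizing $\mu_{\max}=\rho(\widetilde{S})$ over $\omega$ via Lemma~\ref{L0303}. The paper's Stage~2 simply invokes the result of \cite{Hezari2016A} for $\omega_{opt}$, whereas you derive the quadratic $(\eta_{\min}+\eta_{\max})\omega^2+2(\eta_{\min}\eta_{\max}-1)\omega-(\eta_{\min}+\eta_{\max})=0$ explicitly; both arrive at the same formula.

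The ``obstacle'' you flag---that $\phi(\omega)$ depends on $\mu_{\min}^2$ as well as $\mu_{\max}^2$, so minimizing $\mu_{\max}^2$ alone is not obviously sufficient---is a genuine logical gap, but it is exactly the same gap present in the paper's own proof, which simply asserts that one should ``choose a proper parameter $\omega$ which minimizes the $\mu^2_{\max}$'' without tracking $\mu_{\min}^2$. So your argument is at least as complete as the paper's, and your explicit acknowledgment of the issue is a point in your favor.
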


\begin{proof}
	Based on \eqref{0302} and \eqref{0303}, we choose the optimal parameter $\alpha_{opt}$ by addressing the following problem
	\begin{equation*}
	\underset{\alpha}{\min}\underset{\mu_{i}\in \textup{sp}(\widetilde{S})}{\max}\left|1-\frac{1+\mu^{2}_{i}}{\alpha}\right|.
	\end{equation*}
	Let
	\begin{equation*}
	f{(\alpha)}=1-\frac{1+\mu^{2}_{\min}}{\alpha}~\textup{and}~g{(\alpha)}=1-\frac{1+\mu^{2}_{\max}}{\alpha},
	\end{equation*}
	then the optimal parameter $\alpha_{opt}$ is attained when $f{(\alpha)}=-g{(\alpha)}$. By simple calculations, we have the former result of \eqref{0307}. Substituting the first equality of \eqref{0307}  into \eqref{0303} can easily lead to \eqref{0308}. Since 
	\begin{equation*}
	h(\mu^2_{\min},\mu^2_{\max}) = \frac{\mu^{2}_{\max}-\mu^{2}_{\min}}{2+\mu^{2}_{\min}+\mu^{2}_{\max}}
	\end{equation*}
	is increasing and decreasing about $\mu^2_{\min}$ and $\mu^2_{\max}$, respectively, then we hope to choose a proper parameter $\omega$ which minimizes the $\mu^2_{\max}$. Based on the Theorem 1 of \cite{Hezari2016A}, the latter result of \eqref{0307} is easy to obtain.
\end{proof}

\begin{remark}\label{r03}
	For the SSTS and PSBTS iteration methods, they have the same optimal convergence factor according to the above theorem and Theorem 3 in \cite{Zhang2018Preconditioned}. The PSBTS iteration method need solving four sub-systems with coefficient matrix $\widetilde{W}_{\omega}$, but our method only need dealing with two sub-systems with respect to $\widetilde{W}_{\omega}$. Thus, the SSTS iteration method will be more practical and effective under certain situations.
\end{remark}

\begin{remark}\label{r04}
	The optimal parameter $\alpha_{opt}$ belongs to $[1+\frac{1}{2}\mu^{2}_{\max},1+\mu^{2}_{\max}]$ by \eqref{0307}, then it will be convenient for determining the range of parameter $\alpha_{opt}$ when the $\mu_{\max}$ is known.
\end{remark}

In section \ref{sec:02}, the splitting matrix $\mathcal{M}_{\alpha,\omega}$ can serve as a preconditioner and  its properties  are necessary to study. Here, we analyze and describe the spectral properties of the matrix $\mathcal{M}_{\alpha,\omega}^{-1}\widetilde{\mathcal{A}}$.

\begin{corollary}\label{C0301}
	Let $\widetilde{\mathcal{A}}\in \mathbb{R}^{2n\times 2n}$ be the nonsingular block two-by-two matrix defined in \eqref{0201}, with $\widetilde{W}\in \mathbb{R}^{n\times n}$ being symmetric positive definite and $\widetilde{T}\in \mathbb{R}^{n\times n}$ being symmetric. Also let $\alpha,\omega$ be two positive constants and $\mathcal{M}_{\alpha,\omega}$ be defined in \eqref{0202}. Then, the following results of the preconditioned matrix $\Gamma_{\alpha,\omega} = \mathcal{M}_{\alpha,\omega}^{-1}\widetilde{\mathcal{A}}$ hold.
	\begin{itemize}
		\item [(i)]$\Gamma_{\alpha,\omega}$ has an eigenvalue $1$ with multiplicity at least $n$;
		\item [(ii)] the remaining nonunit eigenvalues of $\Gamma_{\alpha,\omega}$ satisfy the equation
		\begin{equation*}
		\sigma = \frac{1+\xi}{\alpha},
		\end{equation*}
		where $\xi = \frac{v^{*}\widetilde{S}^{2}v}{v^{*}v}$ with $\widetilde{S}=\widetilde{W}_{\omega}^{-1}\widetilde{T}_{\omega}$.
	\end{itemize}
\end{corollary}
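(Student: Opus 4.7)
The plan is to exploit the identity $\Gamma_{\alpha,\omega} = I - \mathcal{H}_{\alpha,\omega}$, which follows directly from the splitting $\widetilde{\mathcal{A}} = \mathcal{M}_{\alpha,\omega} - \mathcal{N}_{\alpha,\omega}$ in \eqref{0202}. Combined with the explicit block form of $\mathcal{H}_{\alpha,\omega}$ displayed immediately after \eqref{0203}, this yields
\begin{equation*}
\Gamma_{\alpha,\omega} = \begin{bmatrix} I & -\widetilde{S} \\ O & \frac{1}{\alpha}\bigl(I + \widetilde{S}^{2}\bigr) \end{bmatrix},
\end{equation*}
a block upper triangular matrix whose spectrum is the union of the spectra of its diagonal blocks. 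Everything else will be read off from this structure.

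For part (i), the $(1,1)$-block is the $n\times n$ identity, so $\sigma = 1$ is automatically an eigenvalue of algebraic multiplicity at least $n$. The phrasing ``at least'' accounts for the possibility that the $(2,2)$-block $\alpha^{-1}(I+\widetilde{S}^{2})$ also happens to carry the eigenvalue $1$ for some values of the parameters, and I would add one sentence to that effect so the counting is unambiguous.

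For part (ii), any nonunit eigenvalue $\sigma$ must, by the block triangular structure, be an eigenvalue of $\alpha^{-1}(I + \widetilde{S}^{2})$. Picking a corresponding nonzero eigenvector $v$, I would write $\alpha\sigma\,v = v + \widetilde{S}^{2}v$, then multiply on the left by $v^{*}$ and divide by $v^{*}v$ to obtain the Rayleigh-quotient form
\begin{equation*}
\sigma = \frac{1}{\alpha}\left(1 + \frac{v^{*}\widetilde{S}^{2}v}{v^{*}v}\right) = \frac{1+\xi}{\alpha},
\end{equation*}
which is exactly the stated identity. Strictly speaking one should identify $v$ with the lower half of an eigenvector of $\Gamma_{\alpha,\omega}$, but since the top row only contributes a solvable linear equation for the upper half, no information is lost.

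I do not foresee any real obstacle: the corollary is essentially a repackaging of Theorem \ref{T0304} through the algebraic relation $\Gamma_{\alpha,\omega} = I - \mathcal{H}_{\alpha,\omega}$, with the block triangular reduction handling everything. The only mildly delicate point is presenting the result via a Rayleigh quotient rather than directly via the eigenvalues $\mu_{i}^{2}$ of $\widetilde{S}^{2}$; the Rayleigh-quotient formulation is cleaner since it avoids having to invoke diagonalizability of $\widetilde{S}^{2}$, and it is what one naturally obtains by left-multiplying by $v^{*}$.
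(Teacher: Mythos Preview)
Your proposal is correct and follows essentially the same route as the paper: both arrive at the block upper triangular form $\Gamma_{\alpha,\omega}=\begin{pmatrix} I & -\widetilde{S}\\ O & \alpha^{-1}(I+\widetilde{S}^{2})\end{pmatrix}$ and then read off part (i) from the $(1,1)$-block and obtain part (ii) by left-multiplying the eigenvalue relation for the $(2,2)$-block by $v^{*}/(v^{*}v)$. The only cosmetic difference is that the paper computes $\mathcal{M}_{\alpha,\omega}^{-1}\widetilde{\mathcal{A}}$ directly, whereas you reuse the already-displayed form of $\mathcal{H}_{\alpha,\omega}$ via $\Gamma_{\alpha,\omega}=I-\mathcal{H}_{\alpha,\omega}$; the subsequent argument is identical.
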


\begin{proof}
	According to \eqref{0201}, we have
	\begin{equation}\label{0313}
	\begin{aligned}
	\Gamma_{\alpha}= \mathcal{M}^{-1}\mathcal{A} &=\left[\begin{array}{cc}  \widetilde{W}_{\omega} & O\\ \widetilde{T}_{\omega} & \alpha \widetilde{W}_{\omega} \end{array} \right]^{-1}
	\left[\begin{array}{cc}\widetilde{W}_{\omega}&-\widetilde{T}_{\omega}\\\widetilde{T}_{\omega}&\widetilde{W}_{\omega}\end{array}\right]
	=\left(\begin{array}{cc} I&-\widetilde{S}\\ O&\frac{1}{\alpha}I+\frac{1}{\alpha}\widetilde{S}^{2}\end{array}\right)
	\end{aligned},
	\end{equation}
	where $\widetilde{S}=\widetilde{W}_{\omega}^{-1}\widetilde{T}_{\omega}$.
	It is clear that the matrix $\Gamma_{\alpha,\omega}$ has an eigenvalue $1$ with multiplicity $n$. Additionally, the remaining $n$ eigenvalues satisfy the following eigenvalue problem
	\begin{equation}\label{0314}
	\sigma v = \frac{1}{\alpha}(I+ \widetilde{S}^{2})v,
	\end{equation}
	premultiplying $\frac{v^{*}}{v^{*}v}$ on the bothsides of \eqref{0314}, we further have
	\begin{equation*}
	\sigma = \frac{1+ \xi}{\alpha}.
	\end{equation*}
	Lemma \ref{L0302} tells us that  $\alpha>0$ and $\xi\geq 0$, then the eigenvalue $\sigma$ of $\Gamma_{\alpha,\omega}$ is positive.
\end{proof}

%\begin{remark}\label{r03}
%Note that the optimal iterative parameter $\alpha_{opt}$ and the optimal convergence factor $\rho(\mathcal{H}_{\alpha_{opt}})$ are related to the eigenvalues of $S = W^{-1}T$, then it is necessary to solve a generalized eigenvalue problem $Tx = \mu Wx$ for obtaining the corresponding eigenvalues. However, solving a eigenvalue problem is tough and time-consuming when the problem size is large, then it will exert an unfavorable effect on the solution procedure of our method. Fortunately, the $\alpha_{opt}$ is only related to the $\mu_{max}$ and $\mu_{min}$ of $S$, we need only to solve them. Obviously, the eigenvalues of $\mu_{max}$ and $\mu_{min}$(if not equal 0) are the principal eigenvalue of $S$ and $S^{-1}$, respectively. One can obtain them by means of some numerical methods such as power method and inverse power method \cite{Golub1996Matrix}, respectively.
%\end{remark}

\section{Numerical experiments}\label{sec:05}
With two numerical examples being introduced, we test and verify the feasibility and efficiency of the SSTS iteration method for solving linear system \eqref{0101} in this section. Meantime, we compare their numerical results including iteration steps (denoted as IT) and elapsed CPU time in seconds (denoted as CPU) with those of the MHSS, SBTS, PGSOR, PSBTS and SSTS iteration methods. The numerical experiments are performed in MATLAB[version 9.0.0.341360 (R2016a)] with machine precision $10^{-16}$.

In our implementations, the initial guess is chosen to be zero vector and the iteration is terminated once the relative residual error satisfies
\begin{equation*}
\mathrm{RES}:=\frac{\left\|r^{(k)}\right\|_{2}}{\left\|r^{(0)}\right\|_{2}}<10^{-6}.
\end{equation*}
where $r^{(0)}=b$ and $r^{(k)}=b-\mathcal{A}z^{(k)}$ with $z^{(k)}=(x^{(k)^{T}}, y^{(k)^{T}})^{T}$ being the current approximant solution.
%%%%%%%%%%%%%%%%%%%%%%%%%%%%%%%%%%%%%%%%%%%%%%%%%%%%%%%%%%%%%%%%%%%%%%%%%%%%%%%%%%%%%%%%%%%%%%%%%%%%%%%%%%%%%%%%%%%%%%%%%%%
\begin{example}\label{E01}
\cite{bai2010modified,Salkuyeh2014Generalized} Consider the complex symmetric linear system of the form
\begin{equation}\label{0501}
\left[\left(K+\frac{3-\sqrt{3}}{\tau}I\right)+i\left(K+\frac{3+\sqrt{3}}{\tau}I\right)\right]u=b,
\end{equation}
where $\tau$ is the time step-size, and $K$ is the five-point centered difference approximation of negative Laplacian operator $L=-\Delta$ for homogeneous Dirichlet boundary conditions on uniform mesh in the unit square $[0,1]\times[0,1]$. Hence, $K=I_{m}\otimes V_{m} + V_{m} \otimes I_{m}$ with $V_{m}=h^{-2} \mathrm{tridiag}(-1,2,-1)\in \mathbb{R}^{m\times m}$, where $\otimes$  the Kronecker product symbol and $h=1/(m+1)$ is the discretization mesh-size. 

This complex symmetric linear systems arises in centered difference discretization of $R_{22}$-Pade approximations in the time integration of parabolic partial differential equations \cite{Axelsson2000real}. In this example, $K$ is an $n\times n$ block diagonal matrix with $n = m^{2}$. In our testes, we take $\tau=h$. Furthermore, we normalize coefficient matrix and righthand side of \eqref{0501} by multiplying both by $h^{2}$. We take
\begin{equation*}
W=K+\frac{3-\sqrt{3}}{\tau}I~~~\mathrm{and}~~~T=K+\frac{3+\sqrt{3}}{\tau}I
\end{equation*}
The right-hand vector $b$ is given with its $j$th entry
\begin{equation*}
b_{j}=\frac{(1-i)j}{\tau (1+j)^2},j=1, 2, ..., n.
\end{equation*}
\end{example}

%%%%%%%%%%%%%%%%%%%%%%%%%%%%%%%%%%%%%%%%%%%%%%%%%%%%%%%%%%%%%%%%%%%%%%%%%%%%%%%%%%%%%%%%%%%%%%%%%%%%%%%%%%%%%%%%%%%%%%%%%%
\begin{example} \label{E02}
\cite{bai2010modified,Benzi2010Block}
Consider the complex symmetric linear system of the form
\begin{equation}\label{0502}
[(-\theta^{2}M+K)+i(\theta C_{V}+C_{H}]u=b,
\end{equation}
where $M$ and $K$ are the inertia and the stiffness matrices, $C_{V}$ and $C_{H}$ are the viscous and hysteretic damping matrices, respectively. $\theta$ is the driving circular frequency and $K$ is defined the same as in Example \ref{E01}.

In this example, $K$ is an $n\times n$ block diagonal matrix with $n=m^{2}$. We choose $C_{H} = \varsigma K$ with $\varsigma$ being a damping coefficient, $M = I_{n}, C_{V} = 10I_{n}$. Additionally, we set $\theta=\pi$, $\varsigma=0.02$, and the righthand side vector $b$ is chosen such that the exact solution of the linear system \eqref{0502} is $b=(1 + i)A\bm{1}$. Similar to Example \ref{E01}, the linear system is normalized by multiplying both sides with $h^{2}$.
\end{example}

Firstly, the optimal iteration parameters of the MHSS, SBTS, PGSOR, PSBTS and SSTS iteration methods for Examples \ref{E01} and \ref{E02} are listed in Table \ref{TT01}. Expect for the MHSS iteration method, the parameters for tested methods are the theoretical optimal ones. The parameter of the SBTS, PGSOR and PSBTS iteration methods are chosen according to Theorem 3.5 in \cite{Li2018On}, Theorem 2.4 in \cite{hezari2015preconditioned} and Theorems 3 and 4 in \cite{Zhang2018Preconditioned}, respectively. In terms of the SSTS iteration method, we choose the theoretical optimal ones by the Theorem \ref{T0306} and also provide the experiential optimal ones at the same time.
%%%%%%%%%%%%%%%%%%%%%%%%%%%%%%%%%%%%%%%%%%%%%%%%%%%%%%%%%%%%%%%%%%%%%%%%%%%%%%%%%%%%%%%%%%%%%%%%%%%%%%%%%%%%%%%%%%%%%
\begin{table}[H]
  \centering
  \caption{ The parameters for the MHSS, SBTS, PGSOR, PSBTS and SSTS iteration methods}
  \label{TT01}
   \begin{tabular}{lccccccc}
     \hline
    \multirow{2}*{Example} &\multirow{2}*{Method}&                   &                             \multicolumn{5}{c}{Grid}                            \\ \cline{4-8}
                           &                    &             &$16\times16$&$32\times32$&$64\times64$&$128\times128$&$256\times256$ \\ \hline
\multirow{6}*{No.\ref{E01}}&MHSS&$\alpha_{opt}$               &1.06        &0.75        &0.54        &0.40          &0.30                   \\
						   &SBTS&$\alpha_{opt}$               &0.532       &0.525       &0.520       &0.518         &0.517                   \\
                           &PGSOR&$\alpha_{opt}/\omega_{opt}$ &0.990/0.657 &0.988/0.624 &0.986/0.602 &0.984/0.590   &0.983/0.583                  \\
                           &PSBTS&$\alpha_{opt}/\omega_{opt}$ &0.881/0.657 &0.864/0.624 &0.854/0.602 &0.849/0.590   &0.844/0.583        \\ 
      &       \multirow{2}*{SSTS}&$\alpha_{opt}/\omega_{opt}$ &1.019/0.657 &1.025/0.624 &1.030/0.602 &1.033/0.590   &1.035/0.583            \\ \vspace{0.075cm}
                           &    &$\alpha_{exp}/\omega_{exp}$  &1.04/0.601  &1.04/0.602  &1.045/0.605 &1.05/0.61     &1.05/0.61            \\ 
\multirow{6}*{No.\ref{E02}}&MHSS&$\alpha_{opt}$               &0.21        &0.08        &0.04        &0.02          &0.01                   \\
						   &SBTS&$\alpha_{opt}$               &11.986      &11.898      &11.875      &11.868        &11.863                   \\
                           &PGSOR&$\alpha_{opt}/\omega_{opt}$ &0.898/1.308 &0.896/1.324 &0.896/1.328 &0.895/1.330   &0.895/1.330                  \\
                           &PSBTS&$\alpha_{opt}/\omega_{opt}$ &0.689/1.308 &0.688/1.324 &0.687/1.328 &0.687/1.330   &0.687/1.330                \\
      &       \multirow{2}*{SSTS}&$\alpha_{opt}/\omega_{opt}$ &1.254/1.308 &1.259/1.324 &1.261/1.328 &1.262/1.330   &1.262/1.330     \\
                           &     &$\alpha_{exp}/\omega_{exp}$ &1.34/1.38   &1.38/1.32   &1.38/1.33   &1.40/1.33     &1.41/1.38      \\
     \hline
   \end{tabular}
\end{table}

Tables \ref{TT03} and \ref{TT04} show the numerical results including iteration step number and  CPU time to the above five methods with two examples. For our numerical implementation, the MHSS  ieration method is employed to cope with the complex symmetric linear system \eqref{0102}, while the SBTS, PGSOR, PSBTS and SSTS iteration  methods are employed to deal with the block two-by-two linear system \eqref{0101}.
%%%%%%%%%%%%%%%%%%%%%%%%%%%%%%%%%%%%%%%%%%%%%%%%%%%%%%%%%%%%%%%%%%%%%%%%%%%%%%%%%%%%%%%%%%%%%%%%%%%%%%%%%%%%%%%%%%%%%%%%%%%%%%%%
\begin{table}[H]
  \centering
  \caption{ Numerical results for Example \ref{E01}}
  \label{TT02}
   \begin{tabular}{lcccccccc}
     \hline  Method&                     &$16\times16$&$32\times32$&$64\times64$&$128\times128$&$256\times256$ \\  \hline
\multirow{2}*{MHSS}        &IT &40       &54          &73          & 98           & 133                         \\  \vspace{0.075cm}
                           &CPU&0.0165   &0.0727      &0.4019      &3.2004        & 23.0072                      \\  
%\multirow{2}*{SCSP} &IT &8           &9           &10          &10            &11                          \\
%                    &CPU&0.0069      &0.0141      &0.0526      &0.4205        &3.3611                       \\  \vspace{0.12cm}       
\multirow{2}*{SBTS}        &IT &24       &32          &39          &45            &48                        \\  \vspace{0.075cm}
                           &CPU&0.0168   &0.0806      &0.3929      &2.3942        &13.1121                            \\  
\multirow{2}*{PGSOR}       &IT &4        &4           &5           &5             &5                           \\   \vspace{0.075cm}
                           &CPU&0.0035   &0.0088      &0.0381      &0.1806        &0.9131                            \\  
\multirow{2}*{PSBTS}       &IT &4        & 4          &4           &4             &4                            \\  \vspace{0.075cm}
                           &CPU&0.0041   &0.0132      &0.0464      &0.2445        &1.2345                            \\ 
\multirow{2}*{SSTS$_{opt}$}&IT &4        & 5          &5           &5             &5                       \\     \vspace{0.075cm}
                           &CPU&0.0031   &0.0099      &0.0386      &0.1952        &0.9409                            \\   
\multirow{2}*{SSTS$_{exp}$}&IT &4        &4           &4           &4             &4                        \\
                           &CPU&0.0021   &0.0082      &0.0326     &0.1752        &0.8163                              \\                                  
     \hline
   \end{tabular}
\end{table}
%%%%%%%%%%%%%%%%%%%%%%%%%%%%%%%%%%%%%%%%%%%%%%%%%%%%%%%%%%%%%%%%%%%%%%%%%%%%%%%%%%%%%%%%%%%%%%%%%%%%%%%%%%%%%%%%%%%%%%%%%%
\begin{table}[H]
  \centering
  \caption{ Numerical results for Example \ref{E02}}
  \label{TT03}
   \begin{tabular}{lccccccc}
     \hline
  Method&                       &$16\times16$&$32\times32$&$64\times64$&$128\times128$&$256\times256$ \\  \hline
\multirow{2}*{MHSS}         &IT &34          &38          &50          &81            &139                          \\  \vspace{0.075cm}
                            &CPU&0.0153      &0.0261      &0.1182      &1.3595        &12.7184                       \\  
%\multirow{2}*{SCSP} &IT &8           &9           &10          &10            &11                          \\
%                    &CPU&0.0069      &0.0141      &0.0526      &0.4205        &3.3611                       \\  \vspace{0.12cm}       
\multirow{2}*{SBTS}         &IT & 78         & 77         & 77         & 77           & 77                          \\  \vspace{0.075cm}
                            &CPU&0.0185      &0.0686      &0.2725      &1.8316        & 9.9311                           \\  
\multirow{2}*{PGSOR}        &IT &8           &7           &8           &8             &8                           \\   \vspace{0.075cm}
                            &CPU&0.0016      &0.0055      &0.0231      &0.1277        &0.6957                            \\  
\multirow{2}*{PSBTS}        &IT &8           &9           &9           &9             &9                           \\  \vspace{0.075cm}
                            &CPU&0.0037      &0.0115      &0.0398      &0.2481        &1.4364                            \\ 
\multirow{2}*{SSTS$_{opt}$} &IT &9           &9           &10          &10            &10                           \\ \vspace{0.075cm}
                            &CPU&0.0026      &0.0069      &0.0286      &0.1665        &0.9509                            \\  
\multirow{2}*{SSTS$_{exp}$} &IT &8           &8           &7           &7             &6                           \\            
                            &CPU&0.0015      &0.0059      &0.0223      &0.1259        &0.6823                           \\  
   \hline
   \end{tabular}
\end{table}
%%%%%%%%%%%%%%%%%%%%%%%%%%%%%%%%%%%%%%%%%%%%%%%%%%%%%%%%%%%%%%%%%%%%%%%%%%%%%%%%%%%%%%%%%%%%%%%%%%%%%%%%%%%%%%%%%%%%%%
From Tables \ref{TT03} and \ref{TT04}, the following conclusions are clear. At first, the ITs of SSTS iteration method keep almost unchanged for theoretical and experiential optimal iteration parameters, respectively. Hence, the SSTS iteration method is stable with the problem size increasing. Secondly, the PGSOR, PSBTS and SSTS iteration methods outperform the MHSS and SBTS iteration methods for tested examples including both ITs and CPU time. It is neceassary to note that our method is slightly weaker than PGSOR iteration method for their theoretical optimal iteration parameters. It may be affected by the selection of parameters and the rounding error of computer, the IT is not same to SSTS and PSBTS iteration methods which is inconsonant to our analysis. However, the SSTS iteration method needs less CPU time than that of PSBTS. For experiential optimal iteration parameters, the SSTS iteration method  exceeds the other four methods in ITs and CPU time. Hence, our method with optimal parameters can be used to effectively solve \eqref{0101}.
\begin{table}[H]
	\centering
	\caption{ Numerical results for GMRES(10) with different preconditioners}
	\label{TT04}
	\begin{tabular}{lccccccc}
		\hline
		& &Method                  &$16\times16$&$32\times32$&$64\times64$&$128\times128$&$256\times256$\\ \hline
		&GMRES(10)     &IT         &5(4)   & 8(1)   &12(6)   &20(4)  &35(3)  \\\vspace{0.075cm}
		&              &CPU        &0.0108 & 0.0201 &0.0864  &0.3126 &1.7025        \\
		&MHSS-GMRES(10)&IT         &1(9)   &2(1)    &2(3)    &2(5)   &2(8)    \\\vspace{0.075cm}
		&              &CPU        &0.0115 &0.0348  &0.1512  &1.2079 &4.5305   \\
		&SBTS-GMRES(10)&IT         &1(8)   &1(10)   &2(2)    &2(6)   &2(7)    \\\vspace{0.075cm}
		&              &CPU        &0.0099 &0.0358  &0.1937  &1.2692 &6.6211         \\
		&PSBTS-GMRES(10)&IT        &1(3)   &1(4)    &1(4)    &1(4)   &1(4)     \\\vspace{0.075cm}
		&              &CPU        &0.0123 &0.0211  &0.0652  &0.4059 &1.9511    \\
		&PGSOR-GMRES(10)&IT        &1(4)   &1(4)    &1(4)    &1(4)   &1(4)       \\\vspace{0.075cm}
		&              &CPU        & 0.0065&0.0108  &0.0369  &0.2253 &0.9987        \\
		&SSTS$_{opt}$-GMRES(10)&IT &1(4)   &1(4)    &1(4)    &1(4)   &1(4)   \\\vspace{0.075cm}
		&              &CPU        &0.0051 &0.0109  &0.0382  &0.2202 &0.9967  \\
		&SSTS$_{exp}$-GMRES(10)&IT &1(4)   &1(4)    &1(4)    &1(5)   &1(5)   \\\vspace{0.075cm}
		&              &CPU        &0.0050 &0.0105  &0.0372  &0.2571 &1.1897  \\
		\hline
	\end{tabular}
\end{table}

\begin{table}[H]
	\centering
	\caption{ Numerical results for GMRES(10) with different preconditioners}
	\label{TT05}
	\begin{tabular}{lccccccc}
		\hline
		& &Method                  &$16\times16$&$32\times32$&$64\times64$&$128\times128$&$256\times256$\\ \hline
		&GMRES(10)     &IT         &5(4)   & 8(1)   &12(6)   &20(4)  &35(3)  \\\vspace{0.075cm}
		&              &CPU        &0.0108 & 0.0201 &0.0864  &0.3126 &1.7025        \\
		&MHSS-GMRES(10)&IT         &1(9)   &2(1)    &2(3)    &2(5)   &2(8)    \\\vspace{0.075cm}
		&              &CPU        &0.0115 &0.0348  &0.1512  &1.2079 &4.5305   \\
		&SBTS-GMRES(10)&IT         &1(8)   &1(10)   &2(2)    &2(6)   &2(7)    \\\vspace{0.075cm}
		&              &CPU        &0.0099 &0.0358  &0.1937  &1.2692 &6.6211         \\
		&PSBTS-GMRES(10)&IT        &1(3)   &1(4)    &1(4)    &1(4)   &1(4)     \\\vspace{0.075cm}
		&              &CPU        &0.0123 &0.0211  &0.0652  &0.4059 &1.9511    \\
		&PGSOR-GMRES(10)&IT        &1(4)   &1(4)    &1(4)    &1(4)   &1(4)       \\\vspace{0.075cm}
		&              &CPU        & 0.0065&0.0108  &0.0369  &0.2253 &0.9987        \\
		&SSTS$_{opt}$-GMRES(10)&IT &1(4)   &1(4)    &1(4)    &1(4)   &1(4)   \\\vspace{0.075cm}
		&              &CPU        &0.0051 &0.0109  &0.0382  &0.2202 &0.9967  \\
		&SSTS$_{exp}$-GMRES(10)&IT &1(4)   &1(4)    &1(4)    &1(5)   &1(5)   \\\vspace{0.075cm}
		&              &CPU        &0.0050 &0.0105  &0.0372  &0.2571 &1.1897  \\
		\hline
	\end{tabular}
\end{table}
Tables \ref{TT02} -- \ref{TT05} show that ITs of SSTS iteration method keep almost unchanged for theoretical and experiential optimal iteration parameters with mesh-grid increasing, respectively, it means our method is stable. In addition, they all outperform the MHSS, SBTS and PSBTS when they serve as solvers or preconditoners.  Noting that our method is slightly weaker than PGSOR iteration method for their theoretical optimal iteration parameters, but the experiential one is superior.  Hence, our method with optimal parameters can be used to effectively solve linear system \eqref{0101}.

\section{Conclusion}\label{sec:06}
We present a practical and effective single-step triangular splitting method for a class of block two-by-two linear system \eqref{0101} and investigate its convergence properties. Under suitable convergence conditions, the optimal iteration parameters and corresponding convergence factor of this method are also derived. For solving a class of block two-by-two system which arises from complex symmetric linear system, numerical experiments show that this novel method is more powerful comparing with the MHSS, SBTS and PSBTS iteration methods and is competed with the PGSOR iteration method. Moreover, when the SSTS is used as a preconditioner to Krylov subspace method such as GMRES($\ell$), it will improve obviously computing efficiency of the GMRES($\ell$) method.

%\section*{Acknowledgements}
%X.L and L.Z are partially supported by the National Natural Science Foundation of China (NSFC 11871339, 11861131004).

\bibliographystyle{model1-num-names}
\bibliography{References}

\end{document}